\numberwithin{figure}{section}
\numberwithin{equation}{section}
\newtheorem{Lemma}[equation]{Lemma}
\newtheorem{Theorem}[equation]{Theorem}
\newtheorem*{thm*}{Theorem}
\newtheorem*{Question*}{Question}
\newtheorem{Proposition}[equation]{Proposition}
\newtheorem{Corollary}[equation]{Corollary}
\newtheorem{Claim}[equation]{Claim}
\newtheorem*{Lemma*}{Lemma}
\newtheorem*{Corollary*}{Corollary}
\theoremstyle{remark}
\newtheorem{Remark}[equation]{Remark}
\theoremstyle{definition}
\newtheorem{noTitle}[equation]{}
\newcommand{\ow}{\omega}
\newcommand{\R}{\mathbb{R}}
\newcommand{\Z}{\mathbb{Z}}
\newcommand{\C}{\mathbb{C}}
\newcommand{\N}{\mathbb{N}}
\newcommand{\B}{\mathbb{B}}
\newcommand{\bP}{\mathbb{P}}
\newcommand{\CP}{{\mathbb C}{\mathbb P}}
\newcommand{\ft}{\mathfrak{t}}
\newcommand{\J}{\mathcal{J}}
\newcommand{\calF}{\mathcal{F}}
\newcommand{\calO}{\mathcal{O}}
\newcommand{\Id}{\mathrm{Id}}
\newcommand{\tsigma}{\tilde{\sigma}}
\newcommand{\tow}{\tilde{\omega}}
\newcommand{\del}{\partial}
\newcommand{\bdel}{\bar{\partial}}
\DeclareMathOperator \Ham {Ham}
\DeclareMathOperator\Symp{Symp}
\DeclareMathOperator \Hirz {Hirz}
\DeclareMathOperator \pt {pt}
\DeclareMathOperator \PSL {PSL}
\newif\ifdebug                                                      %
\newcommand{\printname}[1]
   {\smash{\makebox[0pt]{\hspace{-1.0in}\raisebox{8pt}{\tiny #1}}}}
\newcommand{\labell}[1] {\label{#1}{\ifdebug{\printname{#1}}\fi}}
\newcommand{\mute}[1] {}
\begin{document}

\title[Cyclic actions on rational ruled symplectic four-manifolds]
{Cyclic actions on rational ruled symplectic four-manifolds 
}

\author[River Chiang]{River Chiang}
\address{Department of Mathematics, National Cheng Kung University, Tainan 701, Taiwan}
\email{riverch@mail.ncku.edu.tw}

\author[Liat Kessler]{Liat Kessler}
\address{Department of Mathematics, Physics, and Computer Science, University of Haifa,
at Oranim, Tivon 36006, Israel}
\email{lkessler@math.haifa.ac.il}

\begin{abstract}
	Let $(M,\omega)$ be a ruled symplectic four-manifold. If $(M, \ow)$ is rational,
	then every homologically trivial symplectic cyclic action on $(M,\omega)$ is the restriction of a Hamiltonian circle action. 
\end{abstract}

\maketitle

%%%%%%%%%%%%
\section{Introduction}
%%%%%%%%%%%%

\begin{Theorem} \labell{main1}
Let $(M,\omega)$ be a rational ruled symplectic four-manifold. 
Then every homologically trivial symplectic cyclic action on $(M,\omega)$ is the restriction of a Hamiltonian circle action. 
\end{Theorem}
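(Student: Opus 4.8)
The plan is to reduce the statement to the existence of an invariant ruling and then to recognize the action as sitting inside the standard torus action. A homologically trivial symplectic cyclic action is generated by a single symplectomorphism $\phi$ of finite order $k$ with $\phi_\ast = \Id$ on $H_\ast(M;\Z)$, so it suffices to embed $\langle\phi\rangle \cong \Z/k$ into a Hamiltonian circle subgroup of $\Symp(M,\omega)$. First I would average an $\omega$-compatible almost complex structure over the finite group $\langle\phi\rangle$ to obtain a $\phi$-invariant $\omega$-compatible $J$; then $\phi$ is $J$-holomorphic.

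Next I would invoke McDuff's structure theory for ruled surfaces: on a rational ruled four-manifold the fiber class $F$ (with $F\cdot F=0$) is represented, for every $\omega$-tame $J$, by an unobstructed family of embedded $J$-holomorphic spheres that foliate $M$, giving a ruling $\pi\colon M\to S^2$ with symplectic fibers. Because $\phi$ is $J$-holomorphic and $\phi_\ast$ fixes $F$, the map $\phi$ permutes the leaves of this foliation, so it descends to a symplectomorphism $\bar\phi$ of the symplectic base $(S^2,\omega_B)$ and $\pi$ becomes $\phi$-equivariant. Here \emph{rationality} is essential: it forces the base of the ruling to be $S^2$, which admits rotations, while \emph{homological triviality} is what guarantees that $\phi$ preserves $F$ (and a section class), so that the ruling is genuinely invariant rather than, say, interchanged with another fibration.

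I would then analyze the induced actions in two directions. The map $\bar\phi$ is a finite-order area-preserving diffeomorphism of $S^2$; averaging a metric shows it is conjugate to a rotation in $SO(3)$, hence it is either trivial or has exactly two fixed points $p_\pm$. Over a base fixed point the fiber $\pi^{-1}(p)$ is a $\phi$-invariant symplectic sphere on which $\phi$ again restricts to a finite-order rotation. Tracking the fixed loci shows that $\operatorname{Fix}(\phi)$ is a disjoint union of isolated points and symplectic spheres arranged exactly as the fixed-point set of a circle (indeed toric) action, and one reads off the isotropy weights at the fixed points from the fiber and base rotation numbers. Concretely, I would extend the base rotation to a Hamiltonian $S^1$-action on $(S^2,\omega_B)$ and lift it, using the symplectic connection of the Hamiltonian fibration $\pi$, to a Hamiltonian $S^1$-action on $M$; composing $\phi$ with a suitable element of this lifted circle yields a symplectomorphism acting trivially on the base and fiberwise by rotation, which sits inside the fiberwise rotation circle. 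Together these place $\phi$ inside the maximal torus $T^2$ of the toric model, and hence inside the subcircle through the corresponding rational point.

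The main obstacle is this last, global assembly step: turning the leafwise and base rotations --- which are canonical only up to fiberwise conjugacy --- into a single honest Hamiltonian $S^1$-action whose time-$\tfrac{1}{k}$ map is exactly $\phi$, with full symplectic (not merely smooth) control and correct equivariance over the two invariant fibers. I expect to handle it by appealing to Karshon's classification of Hamiltonian $S^1$-actions on symplectic four-manifolds by decorated graphs: the fixed-point data computed above determine such a graph, hence a Hamiltonian $S^1$-action, and the uniqueness part of the classification, together with the connectedness of the relevant symplectomorphism groups (Gromov, Abreu--McDuff), should force $\phi$ to be conjugate into this action, that is, to be the restriction of a Hamiltonian circle action.
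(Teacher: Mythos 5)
Your first steps track the paper's ingredients: averaging to get an invariant $J$, using the $J$-holomorphic foliation in the fiber class $F$ (which exists for every $J$ by McDuff/Abreu--McDuff), descending to a finite-order map of the base, and conjugating sphere rotations (the paper's Proposition \ref{moseractst}). But the ``global assembly step'' that you yourself flag as the main obstacle is a genuine gap, and neither of your proposed fixes works. First, the horizontal lift of a circle action on the base via the symplectic connection of $\pi$ is \emph{not} a circle action on $M$: the time-$2\pi$ flow of the horizontal lift of the rotation field is the holonomy around the orbit, a symplectomorphism of the fiber that is in general far from the identity, so there is no ``lifted Hamiltonian $S^1$-action'' to compose $\phi$ with. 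Even granting such a lift, the composition would only be a fiber-preserving symplectomorphism; the group of these is infinite-dimensional, and asserting that it lies in ``the fiberwise rotation circle'' presupposes a global trivialization in which $\phi$ is simultaneously a rotation on all fibers --- which is exactly the statement to be proved, not an input. Second, the Karshon fallback is not an argument: Karshon's uniqueness theorem classifies Hamiltonian $S^1$-actions by decorated graphs, and says nothing about when a \emph{finite} cyclic action whose fixed-point data ``looks like'' the restriction of a circle action actually embeds in one. Indeed, the authors' earlier paper \cite{counter} exhibits homologically trivial symplectic $\Z_2$-actions on rational surfaces (certain blowups) that do not extend to Hamiltonian circle actions even though the manifolds admit such actions; so no bookkeeping of fixed points and isotropy weights, combined with uniqueness statements for circle actions, can close this step. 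Connectedness of $\Symp$ (Gromov, Abreu--McDuff) conjugates symplectomorphisms isotopic to the identity, but does not produce an equivariant conjugacy of the cyclic action into a torus.

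What the paper does instead is build equivariant \emph{normal forms}, with a case split along the Abreu--McDuff stratification that your single-fibration approach glosses over. In the open stratum $U_0^0$ of $S^2\times S^2$ there are \emph{two} transverse invariant foliations (classes $B$ and $F$), which split the generator as a product $\sigma_1\times\sigma_2$ of sphere rotations up to an equivariant Moser argument (Proposition \ref{propequal}). In all other cases (including $\CP^2\#\overline{\CP^2}$) there is no $J$-holomorphic section foliation; instead one has an invariant embedded sphere of negative self-intersection in $B-kF$ (resp.\ $E-kF$) together with an invariant fiber, and the paper uses an equivariant Weinstein neighbourhood theorem for this pair plus an equivariant version of Gromov's theorem on $D^2\times D^2$ (Lemma \ref{d2lemma}: a symplectic cyclic action linear near the boundary is conjugate to the linear one rel boundary) to identify the action with the standard cyclic action on a Hirzebruch surface $\Hirz_n(a,-b)$, which visibly sits inside a circle action. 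It is these local-to-global holomorphic/symplectic rigidity arguments --- not fixed-point data plus a classification of circle actions --- that carry the proof.
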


A \emph{ruled symplectic four-manifold} is an $S^2$-bundle over a closed Riemann surface, with a symplectic ruling: a symplectic form on the total space that is nondegenerate on each fiber. It is \emph{rational} if the Riemann surface is $S^2$, and \emph{irrational} otherwise.

A \emph{cyclic} action is an 
effective 
action of a cyclic group $\Z_r = \Z / r \Z$ of finite order $1<r< \infty$.
An action is called \emph{homologically trivial} if it induces the identity map on homology.

An effective symplectic action of a torus $T=T^k=(S^1)^k$ on a symplectic manifold $(M,\omega)$ is \emph{Hamiltonian} if there exists a \emph{moment map}, that is, an invariant smooth map
$\Phi \colon M \to \ft^{*}\cong {\R}^k$  such that 
$d \Phi_j = - \iota(\xi_j) \omega $
for all $j=1,\dots,k$, where $\xi_1,\ldots,\xi_k$ are the vector fields
that generate the torus action.
A Hamiltonian circle action is always homologically trivial because the circle group is connected.

A Hamiltonian action of a torus $T$ defines a one-to-one homomorphism from $T$ to the
group of Hamiltonian diffeomorphisms $\Ham(M,\omega)$.
The image
is a subgroup of $\Ham(M,\omega)$ that is isomorphic to $T$. Every circle subgroup of $\Ham(M,\omega)$ is obtained this way.

\begin{Corollary}
Let $(M,\omega)$ be a rational ruled symplectic four-manifold. Then every finite cyclic subgroup of $\Ham(M,\omega)$ embeds in a circle subgroup of $\Ham(M,\omega)$.
\end{Corollary}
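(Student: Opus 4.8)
The plan is to deduce the corollary directly from Theorem~\ref{main1} by recognizing a finite cyclic subgroup of $\Ham(M,\omega)$ as a homologically trivial symplectic cyclic action. Let $G \le \Ham(M,\omega)$ be a finite cyclic subgroup. If $G$ is trivial there is nothing to prove, so assume $G \cong \Z_r$ with $1 < r < \infty$. A choice of generator identifies the inclusion $G \hookrightarrow \Ham(M,\omega)$ with an effective homomorphism $\Z_r \to \Ham(M,\omega)$. Since every Hamiltonian diffeomorphism is in particular a symplectomorphism of $(M,\omega)$, this data is exactly a symplectic cyclic action in the sense of the introduction.

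The first step is to verify that this action is homologically trivial. Every element of $\Ham(M,\omega)$ is a time-one map of the flow of a (time-dependent) Hamiltonian vector field, and is therefore smoothly isotopic to the identity; equivalently, $\Ham(M,\omega)$ is path-connected and lies in the identity component of $\Symp(M,\omega)$. A diffeomorphism isotopic to the identity is homotopic to the identity and hence induces the identity on $H_*(M;\Z)$. Consequently each element of $G$ acts as the identity on homology, so the cyclic action is homologically trivial, and Theorem~\ref{main1} applies.

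By Theorem~\ref{main1}, the action is the restriction of a Hamiltonian circle action $\psi \colon S^1 \to \Ham(M,\omega)$. As recalled just before the corollary, such a $\psi$ is a one-to-one homomorphism whose image $\psi(S^1)$ is a circle subgroup of $\Ham(M,\omega)$. Unwinding the meaning of ``restriction,'' the original homomorphism $\Z_r \to \Ham(M,\omega)$ factors as the standard order-$r$ inclusion $\Z_r \hookrightarrow S^1$ followed by $\psi$; hence $G$ equals $\psi$ of this finite subgroup and therefore sits inside $\psi(S^1)$. Because the $\Z_r$-action is effective, $G$ is an honest copy of $\Z_r$ embedded in the circle subgroup $\psi(S^1)$, which is precisely the assertion.

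The substantive content is supplied entirely by Theorem~\ref{main1}, so the corollary is essentially a formal consequence; the only points requiring care are the homological triviality of the action, which is immediate from the path-connectedness of $\Ham(M,\omega)$, and the bookkeeping needed to read ``restriction of a Hamiltonian circle action'' as the containment $G \subseteq \psi(S^1)$. I expect the latter to be the only place where one must be attentive, ensuring that faithfulness of the $\Z_r$-action is correctly transferred to a genuine embedding into the circle subgroup.
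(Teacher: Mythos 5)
Your proof is correct and matches the paper's intended argument: the paper presents this corollary as an immediate consequence of Theorem~\ref{main1} together with the preceding remark that Hamiltonian circle actions correspond exactly to circle subgroups of $\Ham(M,\omega)$, with homological triviality coming from the fact that Hamiltonian diffeomorphisms are isotopic to the identity. The two points you flag (homological triviality via connectedness, and reading ``restriction'' as containment $G \subseteq \psi(S^1)$) are precisely the bookkeeping the paper leaves implicit.
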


Since every compact $4$-dimensional Hamiltonian $S^1$-space is K\"ahler, i.e., admits a complex structure such that the action is holomorphic and the symplectic form is K\"ahler \cite[Theorem 7.1]{karshon}, Theorem \ref{main1} implies the following corollary.

\begin{Corollary}\labell{cor-holo}
Let $(M,\omega)$ be a ruled symplectic four-manifold equipped with a homologically trivial symplectic cyclic action. If $(M, \ow)$ is rational, there exists an integrable almost complex structure compatible with $\omega$ such that the cyclic action is holomorphic. 
\end{Corollary}

\subsection*{Related works}
This study fits into the broader topological classification of group actions on four-manifolds.
The question of whether a (pseudofree, homologically trivial, locally linear) cyclic action on a four-manifold $M$ must be the restriction of a circle action is studied also in the holomorphic, differentiable, and topological categories; see \cite{Edmonds}. When $M=\CP^2$, the answer is \emph{Yes} in all three categories \cite{Wilc2}, (in fact, these categories coincide).
When $M$ is a rational ruled surface, the answer is \emph{Yes} in the holomorphic category, however
there exist ``exotic" homologically trivial pseudofree cyclic actions on $M$ that are not holomorphic 
but are smoothable with respect to some smooth structure \cite{wilczynski}. By our result, Corollary \ref{cor-holo},
these actions cannot be symplectic.

In the symplectic category, Chen \cite{chen1} showed that the answer is \emph{Yes} when $(M,\omega)$ is $\CP^2$ with the standard Fubini--Study form. Furthermore, 
Chen  \cite{chen1} 
gave an answer in a non-closed case: the answer is \emph{Yes} if $(M,\omega)$ is the four-ball $\B^4$ with the standard symplectic form $\sum_{i=1}^{2}{dx_i \wedge dy_i}$, and the cyclic action is linear near the boundary of $\B^4$.

In a previous paper \cite{counter}, we showed that the statement of Theorem \ref{main1} does not hold in general. On $(M,\omega)$, which is either a six-point blowup of certain sizes of $\CP^2$, or a three-point blowup of certain sizes of an irrational ruled symplectic four-manifold, we gave an example of symplectic action of $\Z_{2}$, 
acting trivially on homology, and showed that it cannot extend to a Hamiltonian circle action; we also showed that $(M,\omega)$ does admit a Hamiltonian circle action.

\subsection*{Acknowledgements}
The question of whether every finite cyclic subgroup of the group of Hamiltonian diffeomorphisms embeds in a circle subgroup
was asked by Jarek Kedra at the ``Hofer 20" conference in Edinburgh in July 2010.
We thank Yael Karshon and Leonid Polterovich for telling us about it, and for helpful suggestions and stimulating discussions.

%%%%%%%%%%%%%%%%%%%%%%%%%%%
\section{Almost complex structures and 
pseudo-holomorphic spheres}
%%%%%%%%%%%%%%%%%%%%%%%%%%%

In this section we recall definitions and results on almost complex structures and pseudo-holomorphic spheres, and deduce corollaries for pseudo-holomorphic spheres with respect to invariant almost complex structures.

\subsection*{Almost complex structures}

An \emph {almost complex structure} on a manifold $M$ is an automorphism $J \colon TM \to TM$
such that $J^2 = -\Id$. An almost complex structure $J$ is \emph{integrable} if it is induced from a complex
manifold structure. 
It is \emph{compatible}
with $\omega$ if
$
\langle \cdot, \cdot \rangle:=\omega(\cdot,J\cdot)
$
is a Riemannian metric on $M$.
A \emph{K\"ahler} manifold is a symplectic manifold with an integrable almost complex structure that is compatible with the symplectic form.
The space of all compatible almost complex structures on a symplectic manifold $(M,\omega)$ is non-empty and contractible \cite[Proposition 4.1]{MS:intro}. We denote it by $\J:=\J(M,\omega).$ The first Chern class of the complex vector bundle $(TM,J)$
is independent of the choice of $J \in \J$; we denote it by $c_1(TM)$.

\begin{noTitle}\labell{PD}
Let $(V, \ow)$ be a symplectic vector space and $g$ an 
inner product on $V$. Denote by $g^\sharp: V \to V^*$ the isomorphism $u \mapsto g(u, \cdot)$ and by $\ow^\sharp: V \to V^*$ the isomorphism $v \mapsto \ow(v, \cdot)$. Then $A = (g^\sharp)^{-1} \circ \ow^\sharp$ is anti-symmetric with respect to $g$. Moreover, $AA^*$ is symmetric and positive definite. Let $P$ be a positive square root of $AA^*$. Then $J=P^{-1}A$	is a compatible complex structure on $V$. The factorization $A = PJ$ is called the \emph{polar decomposition} of $A$.

Suppose $(M, \ow)$ is a symplectic manifold and $g$ a Riemannian metric on $M$. Then we note that the polar decomposition is canonical and the above construction of a compatible almost complex structure is smooth. See \cite[Sec 3.1]{Ana:book}.
\end{noTitle}

\begin{Claim}\labell{claimginv}
For every symplectic action of a compact Lie group $G$ on a compact symplectic manifold $(M,\omega)$ there exists a $G$-invariant $\omega$-compatible almost complex structure.
\end{Claim}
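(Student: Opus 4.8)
The plan is to produce the invariant structure by averaging a metric over $G$ and then feeding it into the canonical polar decomposition construction recalled in \ref{PD}. First I would fix an arbitrary Riemannian metric $g_0$ on $M$ (which exists by paracompactness) and average it against a bi-invariant Haar measure $d\mu$ on the compact group $G$. Writing $\rho_a \colon M \to M$ for the action of $a \in G$, I set
\[
g(u,v) := \int_G g_0\bigl(d\rho_a(u),\, d\rho_a(v)\bigr)\, d\mu(a).
\]
Compactness of $G$ guarantees that $d\mu$ is finite and that the integral converges to a genuine Riemannian metric $g$, which is $G$-invariant by the invariance of the measure: $\rho_b^* g = g$ for every $b \in G$.

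Next I would apply the pointwise polar decomposition of \ref{PD} to the pair $(\omega, g)$, obtaining an $\omega$-compatible almost complex structure $J$; the paragraph \ref{PD} records that this construction is smooth, so $J$ is a bona fide almost complex structure. Since $\omega$ is $G$-invariant (the action is symplectic) and $g$ is $G$-invariant by construction, it remains only to verify that $J$ inherits the invariance.

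The crux is the naturality of the polar decomposition, and this is the only step requiring care. Each $\rho_a$ is simultaneously a symplectomorphism and an isometry, so at every point its differential $d\rho_a$ intertwines both $\omega^\sharp$ and $g^\sharp$ with their counterparts at the image point, hence it intertwines $A = (g^\sharp)^{-1}\circ \omega^\sharp$ and therefore also $A A^*$. Because the positive square root $P$ of the positive symmetric operator $A A^*$ is \emph{unique}, $d\rho_a$ must intertwine $P$ as well, and consequently $J = P^{-1} A$. This yields $d\rho_a \circ J = J \circ d\rho_a$ at every point, i.e. $\rho_a^* J = J$ for all $a \in G$, so $J$ is the desired $G$-invariant $\omega$-compatible almost complex structure. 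I expect the uniqueness of the positive square root — precisely what makes the construction in \ref{PD} canonical — to carry the entire equivariance argument.
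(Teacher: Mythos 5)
Your proposal is correct and follows essentially the same route as the paper: average a Riemannian metric over $G$ using Haar measure, then apply the canonical polar decomposition of \S\ref{PD} to produce the compatible almost complex structure. Your explicit verification that $d\rho_a$ intertwines $A$, hence $AA^*$, hence (by uniqueness of the positive square root) $P$ and $J$, is precisely the naturality that the paper invokes when it calls the polar decomposition ``canonical.''
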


\begin{proof}
Take a $G$-invariant Riemannian metric $g$ on $M$. Such a metric is obtained from some Riemannian metric $g'$ by averaging with respect to the action of $G$ as follows:
$$
g(u,v):=\int_{G}g'({\sigma_{a}}_{*}u,{\sigma_{a}}_{*}v)\,d a
$$
for $u,v \in TM$, where $G \owns a \mapsto \sigma_a \in \Symp(M, \ow)$ denotes the action and $da$ denotes the Haar measure.
 The  polar decomposition in \S \ref{PD} associated to the invariant metric $g$ provides a $G$-invariant almost complex structure.
 \end{proof}

\subsection*{$J$-holomorphic spheres in 
four-dimensional symplectic manifolds}

A \emph{(parametrized) $J$-holomorphic sphere}, or \emph{$J$-sphere} for short, is a map  from $\CP^1$
to an almost complex manifold, $f \colon (\CP^1,j) \to (M,J)$  that satisfies
the Cauchy--Riemann equations
$
df \circ j = J \circ df
$
at every point in $\CP^1$.
The image $C=f(\CP^1)$ is an \emph{unparamterized $J$-sphere.}
A parametrized $J$-holomorphic sphere is  called \emph {simple} if it cannot be factored through a branched covering of the domain.
An embedding is a one-to-one immersion which is a homeomorphism
with its image.
An \emph{embedded J-sphere} $C \subset M$ is the image
of a $J$-holomorphic embedding $f \colon (\CP^1, j) \to (M, J)$. 
If $J$ is $\omega$-compatible, then such a $C$ is an embedded $\omega$-symplectic sphere.

\emph{Gromov's compactness theorem} \cite[1.5.B]{gromovcurves} guarantees that, given a converging sequence of almost complex structures on a compact manifold, a corresponding sequence of holomorphic curves with bounded symplectic areas has a weakly converging subsequence; the limit under weak convergence might be  a connected union of holomorphic curves. 

In dimension four, $J$-holomorphic spheres admit nice properties, which we will use in this study, such as the adjunction formula \cite[Corollary~E.1.7]{nsmall}, the Hofer--Lizan--Sikorav regularity criterion \cite{HLS}, and the positivity of intersections of $J$-holomorphic spheres \cite[Appendix E and Proposition 2.4.4]{nsmall}. As a result of the positivity of intersections we have the following claim.

\begin{Claim} \labell{ginvjhol}
Consider a symplectic action of a compact Lie group $G$ on a symplectic four-manifold $(M,\omega)$. Assume that the action is trivial on $H_{2}(M;\Z)$. Let $J_G$ be a $G$-invariant $\omega$-compatible almost complex structure on $M$.
Let $C$ be a $J_G$-holomorphic sphere. Then $C$ is $\omega$-symplectic.
Moreover, let $G \to \Symp(M, \ow)$, $g \mapsto \sigma_g$, denote the action,
\begin{itemize}
\item if $C$ is of negative self-intersection, then it is $G$-invariant, i.e., $\sigma_g(C)=C$ for all $g \in G$;
\item if $C$ is of self-intersection zero, then for $g \in G$, the image of $C$ under $\sigma_g$ either equals $C$ or is disjoint from $C$.
\end{itemize}
\end{Claim}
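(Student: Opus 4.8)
The plan is to exploit the $G$-invariance of $J_G$: it forces each $\sigma_g$ to be a $J_G$-biholomorphism, so the $G$-translates of $C$ are again $J_G$-spheres in the same homology class, and then I would play this against positivity of intersections to extract the two cases.

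First I would dispose of the symplectic assertion. Write $C=f(\CP^1)$ for a nonconstant $J_G$-holomorphic map $f$. Since $J_G$ is $\omega$-compatible, $\omega(v,J_Gv)>0$ for every nonzero $v$, so $\omega$ restricts positively to each complex tangent line of $C$; hence $\int_C\omega=\int_{\CP^1}f^*\omega>0$, and $C$ is symplectic at its smooth points. This requires no input beyond compatibility.

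The crux is the observation that $G$-invariance of $J_G$ means $d\sigma_g\circ J_G=J_G\circ d\sigma_g$, i.e.\ each $\sigma_g$ is a $J_G$-holomorphic diffeomorphism of $M$. Consequently $\sigma_g\circ f$ is again a $J_G$-holomorphic sphere, so its image $\sigma_g(C)$ is again a $J_G$-sphere, irreducible because it is the image of a sphere. Because the action is trivial on $H_2(M;\Z)$, we have $[\sigma_g(C)]=(\sigma_g)_*[C]=[C]$; thus $\sigma_g(C)$ and $C$ are homologous, and in particular $\sigma_g(C)\cdot C=C\cdot C$.

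Now fix $g$ and suppose $\sigma_g(C)\neq C$. Then $C$ and $\sigma_g(C)$ are two distinct irreducible $J_G$-holomorphic curves sharing no component, so by positivity of intersections each of their finitely many intersection points contributes a strictly positive integer, whence $C\cdot C=\sigma_g(C)\cdot C=\#\bigl(C\cap\sigma_g(C)\bigr)\geq 0$, with equality precisely when the two curves are disjoint. If $C\cdot C<0$ this is a contradiction, forcing $\sigma_g(C)=C$ for every $g$ and proving the first bullet; if $C\cdot C=0$ the same identity gives $\#\bigl(C\cap\sigma_g(C)\bigr)=0$, so $C$ and $\sigma_g(C)$ are disjoint whenever unequal, proving the second. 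The one point I would state carefully is the applicability of positivity of intersections, namely that $C$ and $\sigma_g(C)$ really are distinct irreducible curves with no common component (which holds since each is the image of a sphere) and that homological triviality is what lets me replace $\sigma_g(C)\cdot C$ by the self-intersection $C\cdot C$; everything else is direct bookkeeping with the cited intersection results.
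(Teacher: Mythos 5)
Your proof is correct and follows exactly the route the paper intends: the paper states this claim without a written-out proof, noting only that it is ``a result of the positivity of intersections,'' and your argument---$\sigma_g$ is a $J_G$-biholomorphism, homological triviality forces $\sigma_g(C)\cdot C = C\cdot C$, and positivity of intersections then rules out $\sigma_g(C)\neq C$ when $C\cdot C<0$ and forces disjointness when $C\cdot C=0$---is precisely that argument, with the symplectic assertion handled by compatibility in the standard way.
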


%%%%%%%%%%%%%%%%%%%%%%%%%%%%%%%%%%%%%%%%%%%%%%%%%
\section{
Configurations of $J$-holomorphic spheres in 
rational ruled symplectic four-manifolds
}
%%%%%%%%%%%%%%%%%%%%%%%%%%%%%%%%%%%%%%%%%%%%%%%%%%%%

\begin{noTitle} The underlying space of a rational ruled symplectic four-manifold is either $S^2 \times S^2$ (the trivial bundle) or $\CP^2 \# \overline{\CP^2}$ (the non-trivial bundle). 

In $H_2(S^2 \times S^2;\Z)$, denote the homology classes $B:= [S^2 \times \pt]$ and $F:=[\pt \times S^2]$.

In $H_2(\CP^2 \# \overline{\CP^2};\Z)$, denote by $L$ the homology class of a line $\CP^1$ in $\CP^2$ and by $E$ the class of the exceptional divisor;
denote
 by $F$ the fiber class of the fibration $\CP^2 \# \overline{\CP^2} \to S^2$. Note that
$F=L-E.$

For $\lambda \geq 0$, denote by 
$\omega_{\lambda}^{0}$ the symplectic form $(1+\lambda) \tau \oplus \tau$ on $S^2 \times S^2$ 
where $\tau$ is the standard area form on $S^2$ with $\int_{S^2}\tau=1$.

For $\lambda >-1$, denote by 
$\omega_{\lambda}^{1}$ 
the symplectic form on $\CP^2 \# \overline{\CP^2}$ 
for which there exist an embedded symplectic sphere of area $(2+\lambda)$ in the class $L$ and an embedded symplectic sphere of area $(1+\lambda)$ in the class $E$.

By the work of Gromov \cite{gromovcurves}, Li--Liu \cite{LL2}, Lalonde--McDuff \cite{ML1},  McDuff \cite{McDuff-Structure} and Taubes \cite{Taubes}, up to scaling, every symplectic form on $S^2 \times S^2$ is of the form $\omega_{\lambda}^0$ for $\lambda \geq 0$; every symplectic form on $\CP^2 \# \overline{\CP^2}$ is of the form $\omega_{\lambda}^1$ for $\lambda > -1$.
See also  \cite[Examples 3.5, 3.7]{salamon}.
\end{noTitle}

\begin{noTitle}\labell{strat0}
Consider $(M,\omega)=(S^2 \times S^2, \omega^0_{\lambda}:=(1+\lambda) \tau \oplus \tau)$ with $\lambda \geq 0$.
Take $\ell = \lceil \lambda \rceil$, namely, the integer satisfying
$
\ell-1 < \lambda \leq \ell.
$

Let $J \in \J$.
Note that if a class $A=a_B B+a_F F\in H_{2}(S^2 \times S^2;\Z)$ can be represented by a simple $J$-holomorphic sphere, then, by the adjunction formula, 
\[
0 \leq A\cdot A-c_{1}(A)+2 = 2a_B a_F-2a_B-2a_F+2 =2 (a_B-1)(a_F-1).
\] Also, since $J$ is $\omega$-compatible, $0<\omega(A)=a_B(1+\lambda)+a_F$.
Hence either $a_B,a_F \geq 2$; or $a_B=1$ and $a_F>-(1+\lambda)$; or $a_B \geq 0$ and $a_F=1$;
see \cite[Lemma 1.7]{abreu}.

If $F=\sum_{i=1}^{n} A_i=\sum_{i=1}^{n}(a_B^i B+a_F^i F)$ and each $A_i$ is represented by a simple $J$-holomorphic sphere, then, since $\sum_{i=1}^{n}a_B^i=0$, we must have $a_B^i=0$ and $a_F^i=1$ for all $i$, so $1=\sum_{i=1}^{n} a_F^i=n$.
We conclude that for every $J \in \J$ the class $F$ is \emph{$J$-indecomposable}, i.e., it cannot be written as a sum of two or more classes that are represented by non-constant $J$-holomorphic spheres. However, if $\lambda>0$ then $B=(B-F)+F$, 
with $\int_{(B-F)}\omega^0_{\lambda}=\lambda>0$, $\int_{F}\omega^0_\lambda=1>0
$, and $B-F$ is the homology class of the symplectically embedded antidiagonal $\{(s,-s) \in S^2 \times S^2\}$. 

The following facts are derived form Gromov's compactness theorem and the properties of $J$-holomorphic spheres in dimension four, see Gromov \cite{gromovcurves}, Abreu \cite{abreu}, and Abreu--McDuff \cite{AbMc}.

\begin{enumerate}
\item
The space $\J^{0}_{\lambda}:=\J(M,\omega)=\J(S^2 \times S^2,\omega_{\lambda}^{0})$ is stratified as follows:
\begin{equation} \labell{eqstrat}
\J(S^2 \times S^2,\omega_{\lambda}^{0})=U_{0}^{0} \cup U_{1}^{0} \cup \cdots U_{\ell}^{0}, \quad \ell =\lceil \lambda \rceil \in \N,
\end{equation}
where
\begin{multline*}
	\qquad\qquad\qquad U_k^{0}:=\left\{ J \in \J^0_{\lambda} \mid B-kF \in H_{2}(S^2 \times S^2;\Z)\right.\\ 
	\left. \text{ is represented by a simple }J\text{-holomorphic sphere}\right\}.
\end{multline*}	
In particular,  $U_0^{0}$ is open and dense in $\J_{\lambda}^{0}$.  
\item For every $J \in \J(S^2 \times S^2,\omega_{\lambda}^{0})$, for any $p \in S^2 \times S^2$ there is an embedded $J$-holomorphic sphere in the class $F$ passing through the point $p$, unique up to reparametrizations. The family of these spheres forms the fibers of a fibration $S^2 \times S^2 \to S^2$.
\item For $J \in U_0^0$, the previous statement holds also in the class $B$ instead of $F$. Hence for such $J$ there are two foliations $\calF^{B}_{J}$ and $\calF^{F}_{J}$ whose leaves are embedded $J$-holomorphic spheres in $B$ and in $F$, respectively; the leaf in $\calF^{B}_{J}$ (resp. $\calF^{F}_{J}$) through a point $p$ is the unique $J$-holomorphic sphere in $B$ (resp. $F$) through $p$; any two spheres in the same foliation are disjoint; each sphere in  $\calF^{B}_{J}$  intersects each sphere in $\calF^{F}_{J}$  at exactly one point and  transversally.
\item Moreover, for $J \in U_0^0$, there is a diffeomorphism $\Psi_{J} \colon S^2 \times S^2 \to M$ such that
$\Psi_{J}$ maps the $J_0$-foliations $\calF^B_{J_0}=\{S^2 \times \pt\}$, $\calF^F_{J_0} =\{\pt \times S^2\}$ to the corresponding $J$-foliations, where $\pt \in S^2$ and $J_0=j\times j$ is the standard split complex structure on $S^2\times S^2$. The symplectic form
$\omega_{J}:=\Psi_{J}^{*}(\omega)$
is cohomologous and 
 linearly isotopic to ${\omega}_{0}=\omega_{\lambda}^{0}$. See the construction in  \cite[Theorem~9.4.7]{nsmall}. Hence, by Moser's lemma,
there is  a diffeomorphism $h \colon S^2 \times S^2 \to S^2 \times S^2$ such that $\Psi_{J} \circ h \colon (S^2 \times S^2, \omega_{0}) \to (M,\omega)$ is a symplectomorphism
that induces the identity map on $H_{2}(S^2 \times S^2;\Z)$.
 \end{enumerate}
\end{noTitle}

\begin{noTitle} \labell{generalfoliation}
Items (3) and (4) above generalize to any compact connected symplectic four-manifold $(M,\omega)$ and $\check{J} \in \J(M,\omega)$ that satisfy the following conditions:
\begin{itemize}
\item There is no symplectically embedded $2$-sphere with self-intersection number $-1$.
\item There exist classes $B, F$  in $H_{2}(M;\Z)$ such that $B \cdot B=F\cdot F=0$, $B \cdot F=1$, and both $B$ and $F$ are represented by embedded $\check{J}$-holomorphic spheres.
\end{itemize}
In such a case, replace $J \in U_0^0$ in items (3) and (4) by $\check{J}$, and replace the form $\omega_0$ in item (4) by $ \check{b}\tau \oplus \check{f}\tau$ where $\check{b}:=\int_{B}\omega$ and $\check{f}:=\int_{F}\omega$. 
See the proof of \cite[Theorem~9.4.7]{nsmall}.
\end{noTitle}

\begin{noTitle}\labell{strat01}
Consider $(M,\omega)=(\CP^2 \# \overline{\CP^2}, \omega^1_{\lambda})$ with $\lambda > -1$. 
Take $\ell = \lceil \lambda \rceil$.
Abreu--McDuff \cite{AbMc} gave an analogous description of $\J(\CP^2 \# \overline{\CP^2},\omega_{\lambda}^{1})$.

\begin{enumerate}
\item
The space $\J^{1}_{\lambda}:=\J(\CP^2 \# \overline{\CP^2},\omega_{\lambda}^{1})$ is stratified as follows:
\begin{equation} \labell{eqstrat2}
\J(\CP^2 \# \overline{\CP^2},\omega_{\lambda}^{1})=U_{0}^{1} \cup U_{1}^{1} \cup \cdots U_{\ell}^{1}, \quad \ell =\lceil \lambda \rceil \in \N,
\end{equation}
where
\begin{multline*}
\qquad\qquad\qquad U_k^{1}:=\left\{J \in \J^1_{\lambda} \mid E-kF \in H_{2}(\CP^2 \# \overline{\CP^2};\Z)\right.\\ \left.\text{ is represented by a simple }J\text{-holomorphic sphere}\right\}.
\end{multline*}
 \item For every $J \in \J(\CP^2 \# \overline{\CP^2},\omega_{\lambda}^{1})$, the fiber class $F$ is represented by a $2$-parameter family of embedded $J$-holomorphic spheres that fiber $\CP^2 \# \overline{\CP^2}$.
 \end{enumerate}
\end{noTitle}

\section{Proof of the main theorem}

  \begin{Proposition}\labell{propequal}
Let $(M,\omega)=(S^2 \times S^2,\omega_{\lambda}^{0}:=(1+\lambda)\tau \oplus  \tau)$ with $\lambda \geq 0$ equipped with a homologically trivial symplectic action of a finite cyclic group $G$. Let $J=J_G$ be a $G$-invariant $\omega$-compatible almost complex structure on $M$. Assume that $J$  is in the stratum $U_0^0$.
Then the $G$-action is symplectically conjugate to the restriction of a standard circle action on $(M,\omega)$.
 \end{Proposition}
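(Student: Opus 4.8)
The plan is to use the $G$-invariant almost complex structure $J$ to produce two $G$-invariant transverse foliations by $J$-holomorphic spheres, trivialize $(M,\ow)$ as a product via these foliations in a $G$-equivariant way, identify the two induced cyclic actions on the base spheres as rotations, and finally upgrade the resulting smooth conjugacy to a symplectic one. First I would exploit that $J \in U_0^0$: by \ref{strat0}(3) there are two foliations $\calF^B_J$ and $\calF^F_J$ whose leaves are embedded $J$-holomorphic spheres in the classes $B$ and $F$, each leaf of one meeting each leaf of the other transversally in a single point, and by \ref{strat0}(2),(3) the leaves form smooth fibrations $q_1,q_2\colon M \to S_1, S_2$ over two $2$-spheres. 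Since $G$ acts trivially on homology and preserves $J$, each $\sigma_g$ carries $J$-holomorphic spheres in $B$ (resp.\ $F$) to $J$-holomorphic spheres in $B$ (resp.\ $F$); as $B$ and $F$ have self-intersection zero, Claim \ref{ginvjhol} shows that $\sigma_g$ permutes the leaves of each foliation (any two leaves being equal or disjoint). Hence both foliations are $G$-invariant, $G$ acts on the leaf spaces $S_1,S_2$, and the map $\Phi:=(q_1,q_2)\colon M \to S_1\times S_2$ is a diffeomorphism that intertwines the $G$-action on $M$ with the product action $\bar\rho_1 \times \bar\rho_2$, where $\bar\rho_i$ denotes the induced action on $S_i$.

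Next I would observe that each $S_i$ is naturally a compact Riemann surface: $J$ preserves the tangent spaces to the leaves and therefore descends to an (automatically integrable, since $\dim_{\R} S_i = 2$) complex structure on the leaf space $S_i \cong \CP^1$, and since each $\sigma_g$ is $J$-holomorphic, $\bar\rho_i$ acts by biholomorphisms. A finite cyclic subgroup of $\mathrm{Aut}(\CP^1)=\PSL(2,\C)$ consists of elliptic elements and is conjugate in $\PSL(2,\C)$ to a group of rotations; moreover the complex orientations show each $\bar\rho_i$ is orientation preserving. Consequently $\bar\rho_1 \times \bar\rho_2$ is smoothly conjugate to a product of rotations $(x,y)\mapsto (R_{k_1 t}x, R_{k_2 t}y)$, and effectiveness of the total $G$-action forces $\gcd(k_1,k_2,r)=1$, so that this product action is the restriction to $G=\Z_r$ (at $t\in \tfrac{2\pi}{r}\Z$) of a standard Hamiltonian circle action on $S^2\times S^2$.

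The main obstacle is to promote this \emph{smooth} conjugacy to a \emph{symplectic} one onto $(M,\ow)$. After the identifications above, $\ow$ is pushed forward to a symplectic form $\ow'$ on $S^2\times S^2$ that is invariant under the product of rotations and, because $\Phi$ carries the foliation leaves to curves in the classes $B$ and $F$ and hence preserves the areas $\ow(B)=1+\lambda$ and $\ow(F)=1$, is cohomologous to $\ow^0_\lambda$. I would then run an equivariant Moser argument for the finite group $G$ along the path $\ow_t=(1-t)\ow'+t\,\ow^0_\lambda$, which is $G$-invariant and cohomologically constant. The delicate point is the nondegeneracy of $\ow_t$ for all $t\in[0,1]$, which I would control using that both $\ow'$ and $\ow^0_\lambda$ restrict to positive area forms of matching orientation and total area on the common $G$-invariant foliations $\{S^2\times \pt\}$ and $\{\pt\times S^2\}$, so that the interpolating forms remain positive on $M$. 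Averaging the resulting isotopy over $G$ yields a $G$-equivariant symplectomorphism $(S^2\times S^2,\ow^0_\lambda)\to (S^2\times S^2,\ow')$, and composing all the maps exhibits the $G$-action as symplectically conjugate to the restriction of a standard circle action on $(M,\ow)$.
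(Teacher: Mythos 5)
Your overall strategy coincides with the paper's own proof: the two transverse $G$-invariant foliations coming from $J\in U_0^0$ are used to split the action as a product of two cyclic actions on $S^2$ (the paper does this via the diffeomorphism $\Psi_J$ of \S\ref{strat0}(4) together with uniqueness of the leaves through a point, which is exactly your map $\Phi=(q_1,q_2)$); each factor action is conjugated to a rotation (the paper quotes Proposition \ref{moseractst}, whose proof is precisely your leaf-space/$\PSL(2,\C)$ argument); the $\gcd$ bookkeeping with $c=\gcd(k_1,k_2)$ and the embedding $g\mapsto g^c$ is also as in the paper; and the conclusion is reached by an equivariant Moser isotopy between the transported form and $\omega^0_\lambda$.

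The gap is in the one step you yourself flag as delicate: nondegeneracy of $\omega_t=(1-t)\omega'+t\,\omega^0_\lambda$. Your justification --- both forms restrict to positive area forms of matching orientation on the two foliations, ``so the interpolating forms remain positive'' --- is not a valid inference: the restrictions to the two tangent distributions do not control the cross-pairings between them, and those can destroy nondegeneracy of a convex combination. For instance, on $\R^2\times\R^2$ the constant forms
\[
\omega_1 = dx_1\wedge dy_1 + dx_2\wedge dy_2 + 2\,dx_1\wedge dx_2,
\qquad
\omega_2 = dx_1\wedge dy_1 + dx_2\wedge dy_2 + 2\,dy_1\wedge dy_2
\]
are both symplectic, induce the standard orientation, and restrict positively to both coordinate foliations, yet $\tfrac12(\omega_1+\omega_2)$ satisfies $\bigl(\tfrac12(\omega_1+\omega_2)\bigr)^2=0$, i.e.\ is degenerate everywhere. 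What rescues your argument in the situation at hand is a feature you did not invoke: $\omega^0_\lambda$ is split, i.e.\ has \emph{no} cross-terms between the two standard distributions. Writing $\omega'$ pointwise as $a\,dx_1\wedge dy_1+b\,dx_2\wedge dy_2+\gamma$ with cross-term matrix $C$ (so $a,b>0$ and $ab>\det C$ by positivity and orientation), the cross-term matrix of $\omega_t$ is $tC$, and $a_tb_t\ge t^2ab> t^2\det C$ gives nondegeneracy. Alternatively --- and this is what the paper does --- observe that the pulled-back almost complex structure preserves both standard tangent distributions and orients them positively, hence is tamed by the split form $\omega^0_\lambda$ as well as by $\omega'$; taming is a convex condition, so every $\omega_t$ is symplectic. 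Either repair is short, but some such argument is needed; as written the step does not follow. A second, smaller point: one cannot ``average the resulting isotopy over $G$'' (there is no averaging of diffeomorphisms); instead, as in the paper, one makes the Moser data equivariant --- average the primitive one-form of $\tfrac{d}{dt}\omega_t$ so that the Moser vector field is $G$-invariant and its flow is automatically $G$-equivariant.
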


Note that if $\lambda=0$, the space $\J=U_0^0$. Hence the proposition implies Theorem \ref{main1} for the case $(S^2 \times S^2,\tau \oplus  \tau)$.

We call a symplectic circle action on $(M,\omega)=(S^2 \times S^2,\omega_{\lambda}^{0})$  standard if the circle rotates the first sphere at speed $a$ and the second at speed $b$, where $a$ and $b$ are relatively prime integers.

\begin{proof}
Fix a generator $g \in G$ and denote by $\sigma_g$ the corresponding symplectomorphism defined by the action, i.e., $\sigma_g$ is the image of $g$ under the homomorphism $G \to \Symp(M,\omega)$. 
Since $J$ is $G$-invariant and $\sigma_g$ induces the identity map on $H_2(S^2 \times S^2; \Z)$, the map $\sigma_g$ sends a $J$-holomorphic sphere representing a class in $H_2(S^2 \times S^2;\Z)$ to a $J$-holomorphic sphere representing the same class.
Since $J \in U_0^0$, items (3) and (4) in \S \ref{strat0} apply; in particular there are foliations $\calF^{B}_{J}$ and $\calF^{F}_{J}$ of embedded $J$-holomorphic spheres; the leaves of the foliations through a given point are unique. Therefore, $\sigma_g$ sends each leaf of  $\calF^{B}_{J}$ (resp. $\calF^{F}_{J}$) to a leaf of  $\calF^{B}_{J}$  (resp. $\calF^{F}_{J}$).

Let $J_0 = j \times j$ be the standard split complex structure on $S^2 \times S^2$. By item (4) in \S \ref{strat0}, there exists a symplectomorphism $\Psi_J \colon (S^2 \times S^2, \ow_J=\Psi^*_J\ow) \to(S^2\times S^2, \ow) $ that maps the $J_0$-foliations, $\calF^{B}_{J_0}$, $\calF^{B}_{J_0}$, to the corresponding $J$-foliations, $\calF^{B}_{J}$, $\calF^{F}_{J}$.
Note that the leaves of the $J_0$-foliations coincide with the leaves of the $\Psi_J^*J$-foliations.

Now, define a map $\sigma_1: S^2 \to S^2$ by the following procedure. For $x \in S^2$,  $v_x = \{x\}\times S^2$ is a leaf in $\calF^{F}_{J_0}$; namely, a $J_0$-holomorphic sphere representing $F$.
Then $\Psi^{-1}_J\sigma_g \Psi_J (v_x)$ is a leaf in $\calF^{F}_{J_0}$, and therefore of the form $\{x'\} \times S^2$. Define $\sigma_1(x) = x'$.
Similarly we define $\sigma_2: S^2 \to S^2$ by comparing $u_y = S^2 \times \{y\}$ and
$\Psi^{-1}_J\sigma_g \Psi_J (u_y)$. Note that each $\sigma_i$ generates an action of
$G$ on $S^2$ because $(\Psi^{-1}_J\sigma_g \Psi_J)^r = \Psi^{-1}_J(\sigma_g)^r \Psi_J = \Psi^{-1}_J\sigma_{g^r} \Psi_J$. Observe that $\sigma_i$ is orientation preserving with respect to the orientation defined by $\ow_J$.

We check that $\Psi_J$ is $G$-equivariant with respect to the split action generated by $\sigma_1 \times \sigma_2$ and the given $G$-action, i.e., $$\Psi^{-1}_{J}\sigma_g \Psi_{J}(x,y)=(\sigma_1 \times \sigma_2) (x,y) \text{ for }(x,y) \in S^2 \times S^2.$$
The image $p=\Psi_J(x, y)$ is the intersection
of the $J$-holomorphic spheres
$\Psi_J(v_x)$ and $\Psi_J(u_y)$. By item (3) in \S \ref{strat0}, they are the unique $J$-holomorphic spheres passing through $p$ in the classes $F$ and $B$ respectively. By the same token, $\sigma_g\Psi_J(v_x)$
and $\sigma_g\Psi_J(u_y)$ are the unique $J$-holomorphic spheres passing through $\sigma_g(p)$ in $F$ and $B$, and $\Psi_J^{-1}\sigma_g\Psi_J(v_x)$
and $\Psi_J^{-1}\sigma_g\Psi_J(u_y)$ the unique $J_0$-holomorphic spheres passing through
$\Psi^{-1}_J\sigma_g\Psi_J(x, y)$.  In other words, $\Psi^{-1}_J\sigma_g\Psi_J(x, y)$ is the intersection of $\Psi_J^{-1}\sigma_g\Psi_J(v_x)$ and $\Psi_J^{-1}\sigma_g\Psi_J(u_y)$, which is $(x', y')=(\sigma_1(x), \sigma_2(y)) =:(\sigma_1 \times \sigma_2)(x,y)$. We have the following commutative diagram of symplectomorphisms:

\begin{equation}\labell{diagram-1}
	\begin{CD}
		(S^2 \times S^2, \ow_J=\Psi^*_J\ow) @>{\Psi_J}>> (S^2\times S^2, \ow)\\
		@V{\sigma_1 \times \sigma_2}VV 		@VV{\sigma_g}V \\
		(S^2 \times S^2, \ow_J=\Psi^*_J\ow) @>{\Psi_J}>> (S^2\times S^2, \ow)
\end{CD}\end{equation}

By Proposition \ref{moseractst},
the cyclic action on $S^2$ generated by $\sigma_i:S^2 \to S^2$ is conjugate to the restriction of a circle action on $S^2$ by an orientation-preserving
diffeomorphism $h_i$.
That is,
we have the following commutative diagram
\begin{equation}\labell{diagram-2}
	\begin{CD}
		(S^2 \times S^2, \tow_J=(h_1\times h_2)^*\ow_J) @>{h_1 \times h_2}>> (S^2 \times S^2, \ow_J)\\
		@V{\tsigma_1\times \tsigma_2}VV    @VV{\sigma_1 \times \sigma_2}V 		 \\
		(S^2 \times S^2, \tow_J=(h_1\times h_2)^*\ow_J) @>{h_1 \times h_2}>> (S^2 \times S^2, \ow_J)
\end{CD}\end{equation}
where $\tsigma_i:S^2 \to S^2$ denotes a generator of the restriction of a circle action on $S^2$ to the cyclic subgroup. More precisely, if $a_i$ is the rotation number of $\sigma_i$ on $S^2$, and $r$ stands for the order of $G$, then $\gcd(a_1,a_2,r)=1$ because the given $G$-action is effective. Let $\gcd(a_1,a_2)=c$. Then $\gcd(c, r)=1$, $\gcd(a_1/c, a_2/c)=1$, and $\tsigma_i$ is the restriction induced by the inclusion $G \hookrightarrow S^1$, $g \mapsto g^c$ of the circle action on $S^2$ by rotation at speed $a_i/c$.  
Note that $h_1 \times h_2$ preserves the $J_0$-foliations.

Since $\Psi_J$ and $h_1 \times h_2$ send the class $B$ to $B$ and the class $F$ to $F$, $\tow_J$ and $\ow_0=\ow_\lambda^0$ are cohomologous. 
Since both $\ow_0$ and $\tow_J$ tame $(h_1\times h_2)^*\Psi_J^*J$ and are $G$-invariant with respect to the action generated by $\tsigma_1 \times \tsigma_2$, the $2$-form
\[
\ow_t = (1-t)\ow_0 + t\tow_J
\] is symplectic and $G$-invariant for all $t \in [0,1]$. By Moser's method, one can find a $G$-equivariant
vector field whose time-one flow gives a $G$-equivariant diffeomorphism $h:S^2 \times S^2 \to S^2 \times S^2$ such that $h^*\tow_J =\ow_0$. Namely,
\begin{equation}\labell{diagram-3}
	\begin{CD}
		(S^2 \times S^2, \ow_0) @>{h}>> (S^2 \times S^2, \tow_J)\\
		@V{\tsigma_1\times \tsigma_2}VV    @VV{\tsigma_1 \times \tsigma_2}V 		 \\
		(S^2 \times S^2, \ow_0) @>{h}>> (S^2 \times S^2, \tow_J)
\end{CD}\end{equation}

Combining these three diagrams \eqref{diagram-1}--\eqref{diagram-3}, we conclude that the cyclic group action is symplectically conjugate to 
the restriction of a standard circle action.
\end{proof}

\begin{Remark} \labell{remequal}
If $J_G \in \J(S^2\times S^2,\omega_{\lambda}^{0})$ is $G$-invariant and for some $x,y \in S^2$, the spheres $\{x\} \times S^2$ and $S^2 \times \{y\}$ are $G$-invariant and $J_G$-holomorphic,  then, by the definition of $U_{0}^{0}$, $J_G \in U_{0}^{0}$ and $G$ induces the identity on homology. Hence Proposition \ref{propequal} applies.
Moreover, by the proof of Proposition \ref{propequal}, there is a symplectomorphism $\psi:S^2 \times S^2 \to S^2 \times S^2$ that conjugates the $G$-action and
a standard action on $S^2 \times S^2$. If the $G$-action on $\{x\} \times S^2$ and $S^2 \times \{y\}$ is standard, then $\psi$ can be chosen to be the identity on these spheres. 

In fact this holds for any symplectic form $\ow$ on $S^2\times S^2$ instead of $\omega_{\lambda}^{0}$, by \S \ref{generalfoliation} and the fact that every symplectic sphere in $S^2\times S^2$ has even self-intersection due to homological reason.   
\hfill $\diamondsuit$
\end{Remark}

Next we show an equivariant version of Gromov's result \cite{gromovcurves} on $D^2 \times D^2$, as appeared in \cite[Theorem~1.9]{abreu}. 

Here $D^2$ denotes the unit disc in $\R^2$. Let $\ow_\lambda = (1+\lambda)dx^1 \wedge dx^2 +dy^1 \wedge dy^2$ be a split symplectic form where $(x, y) = ((x^1, x^2), (y^1, y^2)) \in \R^2 \times \R^2$. We call an action of a finite cyclic group $G$ on $\R^2 \times \R^2 \cong \C \times \C$ \emph{linear} if $\sigma_g(x,y)=(\xi^{a}x,\xi^{b}y)$, where $g$ is a generator of $G$, $\sigma_g$ is the corresponding map defined by the action, and $\xi$ is the corresponding root of unity.

\begin{Lemma} \labell{d2lemma}
Let  a cyclic group $G$ of finite order act symplectically on the product of unit discs $D^2 \times D^2$ with a symplectic form $\ow$. 
Assume that $\omega$ equals $\ow_\lambda$ near the boundary, and 
the $G$-action is linear near the boundary.

Then the given $G$-action on $D^2 \times D^2$ is conjugate to the linear action by a symplectomorphism that is the identity on the boundary.
\end{Lemma}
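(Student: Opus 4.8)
The plan is to mimic the strategy of Proposition \ref{propequal} but in the relative (boundary-fixed) setting, producing a foliation of $D^2 \times D^2$ by $G$-invariant holomorphic discs in each of the two "fiber" directions and then straightening this foliation equivariantly. First I would compactify: extend $\ow$ and the $G$-action to a closed symplectic manifold. Since $\ow=\ow_\lambda$ and the action is linear near $\partial(D^2\times D^2)$, I can glue in standard collars to embed $(D^2\times D^2,\ow)$ equivariantly and symplectically into $(S^2\times S^2,\omega^0_\lambda)$ (or a scaling thereof), so that the two disc-factors sit inside the two sphere-factors, the $G$-action extends to a homologically trivial symplectic action on $S^2\times S^2$, and the action is the standard linear rotation outside a compact set contained in $D^2\times D^2$. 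By Claim \ref{claimginv} choose a $G$-invariant $\ow$-compatible $J$ on $S^2\times S^2$ that agrees with the split structure $J_0=j\times j$ near and outside the compactifying region; this forces the coordinate spheres through the relevant points to be $J$-holomorphic, so by Remark \ref{remequal} we have $J\in U_0^0$.

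Next I would apply \S \ref{strat0}: for $J\in U_0^0$ there are two transverse foliations $\calF^B_J$ and $\calF^F_J$ of $S^2\times S^2$ by embedded $J$-holomorphic spheres in classes $B$ and $F$, and by Claim \ref{ginvjhol} (the action is homologically trivial and $F\cdot F=B\cdot B=0$) each $\sigma_g$ permutes the leaves of each foliation. Because $J=J_0$ near the boundary collars, the leaves passing through the boundary region are exactly the flat coordinate discs $\{x\}\times S^2$ and $S^2\times\{y\}$, on which the action is already the given linear rotation; in particular these boundary leaves are $G$-invariant. Just as in the proof of Proposition \ref{propequal}, the straightening diffeomorphism $\Psi_J$ together with the one-dimensional analysis (Proposition \ref{moseractst} applied on each $S^2$-factor) conjugates the $S^2\times S^2$ action to a standard split action $\tsigma_1\times\tsigma_2$. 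The key point, supplied by the relative version of Remark \ref{remequal}, is that since the action is already standard on the boundary coordinate spheres, all of these conjugating maps can be chosen to fix those spheres, hence to be the identity on $\partial(D^2\times D^2)$; the Moser isotopy between $\ow_t=(1-t)\ow_0+t\tow_J$ is likewise $G$-equivariant and, since the two forms already agree near the boundary, can be generated by a vector field supported away from $\partial(D^2\times D^2)$.

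Finally I would restrict the resulting equivariant symplectomorphism of $S^2\times S^2$ to $D^2\times D^2$. Because every conjugating map is the identity on the compactifying collars, it carries $D^2\times D^2$ to itself and restricts to a symplectomorphism that is the identity on the boundary and conjugates the given action to the linear one, as required.

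The main obstacle I expect is \emph{controlling the foliation near the boundary} so that the straightening can genuinely be taken to fix $\partial(D^2\times D^2)$ pointwise. It is not automatic that $\Psi_J$ and the Moser flow are the identity on the boundary merely because the forms and the structure agree there; one must verify that choosing $J=J_0$ on an open neighborhood of the boundary forces the leaves of $\calF^B_J,\calF^F_J$ through that neighborhood to be the flat coordinate discs, and that the normalizations in \S \ref{strat0}(4) and in Remark \ref{remequal} can be arranged rel boundary. Establishing this boundary-rigidity of the leaf-straightening, rather than the (routine) conjugacy statement in the interior, is the crux of the argument.
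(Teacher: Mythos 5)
Your compactification goes in the wrong direction, and the difficulty you flag at the end is fatal rather than technical. After gluing caps to obtain $S^2\times S^2$, the boundary $\partial(D^2\times D^2)$ survives as a three-dimensional hypersurface, and you need the conjugating symplectomorphism to be the identity on all of it. The only rel-boundary statement available from the paper's toolkit (Remark \ref{remequal}) makes the conjugation the identity on two invariant $J$-holomorphic \emph{spheres}; nothing forces it to fix a hypersurface, or even a neighborhood of those spheres. Worse, the specific claim you say must be verified --- that taking $J=J_0$ on a neighborhood of $\partial(D^2\times D^2)$ and on the caps forces the leaves of $\calF^B_J,\calF^F_J$ through that region to be the flat coordinate spheres --- is false in the form you need. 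A leaf is a \emph{global} $J$-holomorphic sphere: for a boundary point $(x,e^{i\theta})\in D^2\times \partial D^2$ with $x$ deep in the interior of the first disc, the candidate leaf $\{x\}\times S^2$ passes through the region deep inside $D^2\times D^2$ where $J$ is only the averaged invariant structure, so it need not be $J$-holomorphic at all; the actual $F$-leaf through $(x,e^{i\theta})$ is then some other sphere (near the boundary it can be a nonconstant $J_0$-holomorphic graph over the second factor), and the straightening map $\Psi_J$ will move $(x,e^{i\theta})$. (The $B$-leaf $S^2\times\{e^{i\theta}\}$ through that point does lie entirely in the region where $J=J_0$ and hence is flat, but one flat family out of two is not enough to pin down the boundary pointwise.) The equivariant Moser step inherits the same problem, so your scheme does not produce a conjugation that is the identity on the boundary.

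The paper sidesteps this entirely by compactifying in the opposite way: it \emph{collapses} the boundary via $\kappa\colon D^2\times D^2\to S^2\times S^2$, sending each circle $\{x\}\times\partial D^2$ to the single point $(x,\infty)$ and each $\partial D^2\times\{y\}$ to $(\infty,y)$. Under $\kappa$ the whole boundary hypersurface is crushed onto the two spheres $S^2\times\{\infty\}$ and $\{\infty\}\times S^2$, which are $J_G$-holomorphic, $G$-invariant, and carry the standard rotation. The rel-boundary requirement of the Lemma thereby becomes precisely the rel-two-spheres statement that Remark \ref{remequal} supplies: the conjugation $\psi$ of $S^2\times S^2$ fixing the two spheres at infinity pulls back through $\kappa$ to the desired symplectomorphism of $D^2\times D^2$ that is the identity on the boundary. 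No boundary rigidity of foliations is ever needed. To salvage your capping approach you would have to prove that rigidity yourself, and the example above shows it does not come for free from the choice of $J$.
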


\begin{proof}
Consider the collapsing map $$\kappa \colon D^2 \times D^2 \to S^2 \times S^2$$ that identifies the points of $\{x\} \times \del D^2$, for $x \in D^2$, as $(x,\infty)$, and identifies the points of $\del D^2 \times \{y\}$, for $y \in D^2$, as $(\infty,y)$. 
Let $J_G$ be a $G$-invariant $\omega$-compatible almost complex structure on $D^2 \times D^2$ that is equal to the standard split complex structure $J_0$ near the boundary. Such $J_G$ exists by taking any $\ow$-compatible almost complex structure $J$ that is equal to $J_0$ near the boundary, averaging the metric $\omega(\cdot,J \cdot)$ with respect to the $G$-action, and then taking the structure provided by the polar decomposition, as in Claim \ref{claimginv}. 

Consider the image $S^2 \times S^2$ with the induced symplectic form and the induced (compatible) almost complex structure, again denoted by $\ow$ and $J_G$. 
Let $g \in G$ be a generator of $G$ and $\sigma_g$ be the corresponding symplectomorphism of $D^2\times D^2$ defined by the action. Denote by $\bar{\sigma}_g$ the symplectomorphism of $S^2 \times S^2$ that pulls back to $\sigma_g$. On the image of the boundary,  $\bar{\sigma}_g(x,\infty)=(\xi^{a}x,\infty)$ and $\bar{\sigma}_g(\infty, y)=(\infty, \xi^{b}y)$. On the complement of $S^2 \times \{\infty\} \cup \{\infty\} \times S^2$, we have $\bar{\sigma}_g(x,y)=\sigma_g(\kappa^{-1}(x,y))$. 

The spheres $S^2 \times \{\infty\}$ and $\{\infty\} \times S^2$ in $S^2 \times S^2$ are $J_G$-holomorphic and invariant under $\bar{\sigma}_g$. 
Moreover, the $G$-action $\bar{\sigma}_g$ is standard on these spheres. As explained in Remark \ref{remequal}, by Proposition \ref{propequal}, there is a symplectomorphism $\psi \colon S^2 \times S^2 \to S^2 \times S^2$ conjugating $\bar{\sigma}_g$ and a standard action, and $\psi$ is the identity on $\{\infty\} \times S^2$ and $S^2 \times \{\infty\}$. Hence this standard action is the one induced from $(x, y) \mapsto (\xi^a x, \xi^b y)$.

Define  $$\Psi \colon D^2 \times D^2 \to D^2 \times D^2$$ as the identity on $ D^2 \times \del D^2 \cup \del D^2 \times D^2$ and $\kappa^{-1}\circ \psi \circ \kappa$ on the complement of the boundary.
 Then $\Psi$ is a symplectomorphism conjugating the given action and the linear action, that is the identity on the boundary.
\end{proof}

We now prove Theorem \ref{main1} for the cases that are not covered by Proposition \ref{propequal}, i.e., if
$(M,\omega)=(S^2\times S^2,\omega_{\lambda}^{0})$ and $J=J_G$ is not in $U_0^0$  or if $(M,\omega)=(\CP^2 \# \overline{\CP^2},\omega_{\lambda}^{1})$.
A $G$-invariant $\omega$-compatible almost complex structure $J_G$ on $(M,\omega)$ exists by Claim \ref{claimginv}.

\begin{Proposition}\labell{no-hirz}
Let a cyclic group $G$ of finite order act symplectically on $(M^{\star}_{\lambda},\omega^{\star}_{\lambda})$ for $\star=0, \,1$. Assume that the action is trivial on $H_2(M^{\star}_{\lambda};\Z)$.
 Let $J=J_G$ be a $G$-invariant $\omega^{\star}_{\lambda}$-compatible almost complex structure.
Set $A_k:=B-kF$ if $\star=0$ and $A_k:=E-kF$ if $\star=1$.

Assume that $A_k$ is represented by a simple $J$-holomorphic sphere $C_1$ for an integer $k > -\star$.
 Then the following hold.
\begin{enumerate}
\item The  sphere $C_1$ is embedded and  $G$-invariant and there is an embedded $G$-invariant sphere $C_2$ in the class $F$ such that $C_1$ and $C_2$ intersect at a point; the $G$-action  on each sphere is, up to conjugation by a symplectomorphism of the sphere, a rotation, with rotation numbers $(a, -b)$ at $C_1 \cap C_2$.
\item The $G$-action is symplectically isomorphic to the standard $G$-action on the Hirzebruch surface $\Hirz_{n}(a,-b)$ with the symplectic form $\eta_{\mu}$, as described in  \S \ref{hirzsympl} and \S \ref{hirzcyclic}, where $n=2k+\star$ and $\mu=1+\lambda-k$. 
\end{enumerate} 
\end{Proposition}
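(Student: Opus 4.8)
For (1), I would start by pinning down the homological type of $A_k$. Using $B\cdot B=F\cdot F=0$, $B\cdot F=1$ (resp.\ $E\cdot E=-1$, $E\cdot F=1$, $F\cdot F=0$) one computes $A_k\cdot A_k=-(2k+\star)=-n$, which is strictly negative because the hypothesis $k>-\star$ forces $n=2k+\star\ge 1$. Hence $C_1$ has negative self-intersection and Claim \ref{ginvjhol} gives that $C_1$ is $G$-invariant and $\omega$-symplectic. Computing the Chern number ($c_1(A_k)=2-2k$ for $\star=0$ and $c_1(A_k)=1-2k$ for $\star=1$) yields $A_k\cdot A_k-c_1(A_k)+2=0$, so the adjunction formula forces $C_1$ to be embedded. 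The restriction of the action to the symplectic sphere $C_1$ is area-preserving, hence orientation-preserving, so as a finite cyclic action on $S^2$ it has a fixed point $p$ and is conjugate by a symplectomorphism to a rotation (the classification of finite cyclic area-preserving $S^2$-actions used in Proposition \ref{propequal}). Since $J$ is $G$-invariant and $F\cdot F=0$, the fibration of $M$ by embedded $J$-holomorphic $F$-spheres from \S\ref{strat0}(2) (resp.\ \S\ref{strat01}(2)) is permuted by the action; the fiber $C_2$ through the fixed point $p$ is therefore $G$-invariant by uniqueness of the fiber, and since $C_1\cdot F=1$, positivity of intersections gives $C_1\cap C_2=\{p\}$ transversally. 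Finally, the $G$-invariant $J$-complex splitting $T_pM=T_pC_1\oplus T_pC_2$ exhibits the isotropy representation, whose two weights are the rotation numbers on $C_1$ and $C_2$; normalizing signs to match the model they read $(a,-b)$.

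For (2), I would first record the two areas $\int_F\omega^\star_\lambda=1$ and $\int_{A_k}\omega^\star_\lambda=(1+\lambda)-k=\mu$; together with $A_k\cdot A_k=-n$ these fix the topological and cohomological type of the model $(\Hirz_n,\eta_\mu)$, since $F$ and the section class generate $H_2$. The plan is then to use the $G$-equivariant foliation by $F$-spheres as the bundle projection $M\to S^2$, and the invariant section $C_1$ to rigidify it, so as to build a $G$-equivariant diffeomorphism $\Phi\colon M\to \Hirz_n$ that carries the foliation, the section $C_1$, the invariant fiber $C_2$ and the point $p$ to their standard counterparts and intertwines the given action with the standard $\Hirz_n(a,-b)$ action. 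This is the equivariant analogue of the construction behind \cite[Theorem~9.4.7]{nsmall} recalled in \S\ref{generalfoliation}, and it is available precisely because $J$ is $G$-invariant.

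To finish, I would transport $\eta_\mu$ back by $\Phi$. Both $\Phi^*\eta_\mu$ and $\omega$ are $G$-invariant and cohomologous by the area computation, and both tame the $G$-invariant almost complex structure induced from $J$; hence $\omega_t=(1-t)\Phi^*\eta_\mu+t\omega$ is a path of $G$-invariant symplectic forms for $t\in[0,1]$. An equivariant Moser argument, exactly as in the Moser step of Proposition \ref{propequal}, then produces a $G$-equivariant symplectomorphism, completing the identification with the standard Hirzebruch action.

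The main obstacle I expect is the equivariant bundle identification in (2): arranging $\Phi$ to be genuinely $G$-equivariant while simultaneously matching the foliation, the section $C_1$, and both the base- and fiber-rotation numbers, and then ensuring the interpolating forms $\omega_t$ remain symplectic. The latter is dispatched by the common-taming trick, but the former is delicate, since a priori the action could twist as one moves along the base or along the fibers. The rigidity of the $J$-holomorphic $F$-foliation together with the $G$-invariance of $J$ and of $C_1$ is exactly what rules this out and forces the action to be the standard one.
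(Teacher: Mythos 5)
Your part (1) is correct and is essentially the paper's own argument: negative self-intersection of $A_k$ plus Claim \ref{ginvjhol} gives $G$-invariance, adjunction (with $A_k\cdot A_k-c_1(A_k)+2=0$) gives embeddedness, Proposition \ref{moseractst} makes the restricted actions rotations, and invariance of the fiber $C_2$ through the fixed point $p$ follows from uniqueness of the fiber together with Claim \ref{ginvjhol}.

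The gap is in part (2), and it sits exactly where you flag ``the main obstacle.'' The construction you invoke, \S\ref{generalfoliation} (i.e.\ the proof of \cite[Theorem~9.4.7]{nsmall}), is not available here: it requires \emph{two} classes of square zero, both represented by embedded $J$-holomorphic spheres (and the absence of symplectic $(-1)$-spheres), and this is precisely what fails in the setting of Proposition \ref{no-hirz}. Indeed, once $A_k$ with $A_k\cdot A_k=-n<0$ is $J$-represented, positivity of intersections shows that $B$ (resp.\ $E$) admits no $J$-holomorphic representative at all, since $B\cdot A_k=-k<0$ (resp.\ $E\cdot A_k=-1-k<0$) while the two simple spheres would have distinct images; and for $\star=1$ the no-$(-1)$-sphere hypothesis fails outright. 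So you have only the single $F$-foliation plus the negative section $C_1$, and no cited result manufactures from these a diffeomorphism $\Phi\colon M\to\Hirz_n$, let alone a $G$-equivariant one that intertwines the given action with the standard $\Hirz_n(a,-b)$-action and for which $\Phi^*\eta_\mu$ tames a common $G$-invariant almost complex structure (your Moser step silently presupposes this taming, which is a property of the unconstructed $\Phi$, not of the data). The closing claim that invariance of $J$ and of $C_1$ ``forces the action to be the standard one'' is the entire content of the proposition, not a consequence of soft rigidity: Wilczy\'nski's exotic homologically trivial cyclic actions on these very manifolds show that in the topological category an action can twist even while respecting a fibration-like structure, so the symplectic input must enter through a genuine argument. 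The paper supplies it by a different route: since $G$ is abelian, the isotropy representation at $p$ makes $C_1$ and $C_2$ meet $\ow$-orthogonally, so the equivariant Weinstein neighbourhood theorem \cite{guadagni} identifies a neighbourhood of $C_1\cup C_2$ equivariantly and symplectically with a neighbourhood of $S_0\cup F_0$ in $(\Hirz_n(a,-b),\eta_\mu)$; the complement of $S_0\cup F_0$ is identified with $\C^2$, where the pulled-back action is linear near infinity; the backward Liouville flow of the potential of $\eta_\mu$ then carries it to an action on $D^2\times D^2$ that is linear near the boundary; and finally Lemma \ref{d2lemma} (which collapses $D^2\times D^2$ to $S^2\times S^2$ and invokes Proposition \ref{propequal}, i.e.\ the two-foliation case) linearizes it. Your outline needs this chain, or an equivalent equivariant trivialization argument, before it becomes a proof.
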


The proof of item (2) is an equivariant variation of the proof of \cite[Lemma 3.5]{abreu}.

\begin{proof}\hfill
\begin{enumerate}
\item Fix a generator $g \in G$ and denote by $\sigma_g$ the corresponding symplectomorphism defined by the action. By Claim \ref{ginvjhol}, since the self-intersection number of $A_k$ is negative, the $J$-holomorphic sphere $C_1$ in $A_k$ is $G$-invariant. By the adjunction formula \cite[Corollary~E.1.7]{nsmall} and since $A_k \cdot A_k-c_{1}(A_k)+2=0$, the simple $J$-holomorphic sphere $C_1$ is embedded.
By Proposition \ref{moseractst}, the restriction of the $G$-action to $C_1$ is conjugate 
to the restriction to a cyclic subgroup of a circle action, i.e., a rotation.
In particular, it fixes two points on $C_1$. Let $p$ be one of these $G$-fixed points with rotation number $a$ at $p$. By item (2) in \S \ref{strat0} and \S \ref{strat01}, there is an embedded $J$-holomorphic sphere $C_2$ in the class $F$ passing through the point $p$. By Claim \ref{ginvjhol}, since $F \cdot F=0$, the image $\sigma_g(C_2)$ either coincides with $C_2$ or is disjoint from it. However, since the $G$-fixed point $p$ is on $C_2$, $\sigma_g(C_2)$ must coincide with $C_2$. So $C_2$ is also invariant under the $G$-action. Hence, by Proposition \ref{moseractst}, up to conjugation by a symplectomorphism, the restriction of the $G$-action to $C_2$ is a rotation with rotation number $-b$ at $p$.

\item
By item (1), $C_1$ and $C_2$ are $G$-invariant symplectic spheres intersecting transversally at one point $p$ with rotation numbers $(a, -b)$ at $p$. Since $G$ is finite abelian, the tangent space at the fixed point $p$ decomposes into a direct sum of symplectically orthogonal eigenspaces of $G$; hence $C_1$ and $C_2$ in fact intersect  $\ow_\lambda^\star$-orthogonally at $p$. The sphere $C_1$ has self-intersection number $-n$ and size $\mu$; the sphere $C_2$ has self-intersection number $0$ and size $1$. Using the equivariant version of Weinstein's symplectic neighbourhood theorem for such pair of submanifolds \cite{guadagni} (also see \cite{mcrae}), we can construct a
diffeomorphism
\[
\alpha \colon \Hirz_{n} \to  M^{\star}_{\lambda}
\]
that sends the zero section $S_0$ to $C_1$ and the fiber at zero $F_0$ to $C_2$
and is
an equivariant symplectomorphism from a neighbourhood $U$ of $S_0 \cup F_0$ in $(\Hirz_{n}(a,-b),\eta_\mu)$ onto a neighbourhood $U'$ of $C_1 \cup C_2$ in $ (M^{\star}_{\lambda},\omega^{\star}_{\lambda})$ with the given $G$-action.
 (See \S \ref{hirzsympl} and \S \ref{hirzcyclic} for the notations.) So it is enough to show that the $\alpha$-pull back of the $G$-action is the standard $G$-action on $\Hirz_{n}(a,-b)$, up to conjugation by a symplectomorphism.

Since $S_0$ and $F_0$ are invariant symplectic spheres, the restriction of the $\alpha$-pull back of the $G$-action to the complement of $S_0 \cup F_0$ in $\Hirz_{n}$ is a symplectic action with respect to $\alpha^*\ow_\lambda^\star$. Since $\alpha$ is a symplectomorphism near $S_0 \cup F_0$, $\alpha^*\ow$ equals $\eta_\mu$ there.

The complement of $S_0 \cup F_0$ in $\Hirz_{n}$ is
\[
\{([w, 1],[z,w^n,1])\}\subset \CP^1 \times \CP^2
\]
and the map $$\beta \colon \C^2 \to \Hirz_{n}$$ given by $(w,z) \mapsto ([w, 1],[z,w^n,1])$ pulls back $\eta_\mu$ to
\[
\begin{split}
\Omega_\mu&=\frac{i}{2\pi}[\mu\del\bdel\log (1+\|w\|^2) + \del\bdel \log (1+\|w\|^{2n} + \|z\|^2)]\\
&=\frac{i}{2\pi}\left( \left[\frac{\mu}{(1+\|w\|^2)^2} + \frac{n^2\|w\|^{2(n-1)}(1+\|z\|^2)}{(1+\|w\|^{2n}+\|z\|^2)^2}\right] dw\wedge d\bar{w}\right. \\
&\phantom{=\frac{i}{2\pi}\left( \left[\frac{\mu}{(1+\|w\|^2)^2}\right.\right. }\qquad\qquad\left. + \frac{(1+\|w\|^{2n})}{(1+\|w\|^{2n}+\|z\|^2)^2} dz\wedge d\bar{z}\right).
\end{split}
\]
A neighbourhood of $\infty$ at $\C^2$ is sent to a neighbourhood of $S_0 \cup F_0$.
The standard action on $\Hirz_n(a, -b)$ pulls back to a linear action, $(w, z) \mapsto (\xi^{-a}w, \xi^{b-na} z)$, and therefore the
$\alpha$-pull back of the $G$-action on $\Hirz_n$ pulls back to an action on $\C^2$ which is linear near infinity.

Denote $$f=\log \bigr((1+\|w\|^2)^{\mu} (1+\|w\|^{2n} + \|z\|^2)\bigr).$$
Note that $\Omega_{\mu}=\frac{i}{2\pi}\del\bdel f$.
Consider the gradient vector field $X_f$ of $f$ with respect to the Riemannian metric induced by $\Omega_\mu$ and the standard almost complex structure $J_0$ on $\C^2$. Then $$L_{X_f}{\Omega_\mu}={\Omega_\mu}.$$
The function $f$ strictly increases along rays from the origin. Moreover, the function $f$ and the Riemannian metric are invariant under any linear $G$-action on $\C^2$. Hence the backward flow of $X_f$ commutes with linear $G$-actions, and 
takes the $(\beta \circ \alpha)$-pull back of the $G$-action on $\C^2$, which is linear near infinity, to an action on $D^2\times D^2$ which is linear near the boundary.  
Now the proposition follows from Lemma \ref{d2lemma}. 

Note that the sphere $S_0$ is the preimage of the top edge of $H_{n,\mu}$ under a moment map of the Hamiltonian $T^2$-action \eqref{toric-act} on $(\Hirz_{n},\eta_\mu)$; the sphere $F_0$ is the preimage of the left edge; see Figure \ref{trapezoid}.
The complement of $S_0 \cup F_0$ is the preimage of the complement of the top and left edges, so it corresponds to a standard star-shaped region in $\C^2$, and the Liouville vector field $X_f$ is a lift of the radial vector on the trapezoid in $\R^2$ emitting from the bottom right corner.

\end{enumerate}
\end{proof}

%%%%%%%%%%%%%%%%%%%%%%%%%%%%%%%%%
%%%%%%%%%%%%%%%%%%%%%%%%%%%%%%%%%%
\appendix
%%%%%%%%%%%%%%%%%%%%%%%%%%%%%%%%%
%%%%%%%%%%%%%%%%%%%%%%%%%%%%%%%%%

\section{Toric and cyclic actions on Hirzebruch surfaces}
%\begin{noTitle}
\subsection{Hirzebruch surface}\label{hirzsympl}
Let $\calO(n)$ denote the complex line bundle over $\CP^1$ with first Chern class $n$. The
 \emph{Hirzebruch surface} $\Hirz_n$ is the complex manifold defined as a projective bundle, for any integer $n$:
 \[ \Hirz_n \cong \bP(\calO(0)\oplus \calO(-n))\cong ((\calO(0) \oplus \calO(-n)) \smallsetminus 0)/\C^\times.
 \]
It is a $\CP^1$-bundle over $\CP^1$ obtained by gluing two copies $V_1, V_2$ of the form $\C \times \CP^1_{\text{fiber}}$ along
\[
(\C\smallsetminus \{0\}) \times \CP^1 \to (\C\smallsetminus \{0\}) \times \CP^1 \qquad (z, [x_1, x_2]) \mapsto (\frac{1}{z}, [x_1, z^n x_2]).
\]
The zero section $S_0$ corresponds to $\{(z, [1, 0])\}$; the section at infinity $S_\infty$ corresponds to $\{(z, [0, 1])\}$; the fiber at zero $F_0$ refers to the fiber at $[1, 0]\in \CP^1_{\text{base}}$, namely, $\{(0, [x_1, x_2])\}$ in $V_1$; and the fiber at infinity $F_\infty$ refers to the fiber at $[0,1] \in \CP^1_{\text{base}}$, namely, $\{0, [x_1, x_2]\}$ in $V_2$. The self-intersection numbers of $S_0$, $S_\infty$, $F_0$, $F_\infty$ are $-n,n,0,0$, respectively.

One can equip the Hirzebruch surface $\Hirz_n$ with a symplectic structure through symplectic cutting. Alternatively, 
note $\calO(-1)$ is the tautological line bundle over $\CP^1$, whose fiber at $l \in \CP^1$ is the line $l$ in $\C^2$. Since $\calO(-n)$ is the $n$-th tensor power of $\calO(-1)$,
the Hirzebruch surface can be identified with an algebraic submanifold of
$\CP^1 \times \CP^2$ defined in homogeneous coordinates by
\[
\{([w_1,w_2], [z_0,z_1,z_2]) \in \CP^1 \times \CP^2 \mid w_{1}^{n}z_2=w_{2}^{n}z_1\}.
\]
The isomorphism is given by
\begin{align*}
\text{If }{w_1 \neq 0 }, && ([w_1, w_2], [z_0,z_1,z_2]) &\mapsto (\frac{w_2}{w_1}, [z_0,z_1]) \in V_1  \\
\text{If }{w_2 \neq 0 }, && ([w_1, w_2], [z_0,z_1,z_2]) &\mapsto (\frac{w_1}{w_2}, [z_0,z_2]) \in V_2
\end{align*}
Any K\"ahler form on $\CP^1 \times \CP^2$ restricts to a K\"ahler form on $\Hirz_n$.
For $\mu \geq 0$, we denote by $\eta_\mu$ the sum of the Fubini--Study form on $\CP^1$ multiplied by $\mu$ and the Fubini--Study form on $\CP^2$. We shall use the same notation for its restriction to the Hirzebruch surface.

With respect to the symplectic form $\eta_\mu$, we find that the symplectic spheres
\begin{align*}
S_0&=\{([w_1,w_2], [1,0,0])\} \text{ has size }\mu& &\text{(zero section),}\\
S_\infty&=\{([w_1,w_2], [0, w_1^n, w_2^n])\} \text{ has size }\mu+n& &\text{(section at infinity),}\\
F_0&=\{([1, 0], [z_0,z_1,0])\} \text{ has size }1& &\text{(fiber at zero),}\\
F_\infty&=\{([0, 1], [z_0,0,z_2])\} \text{ has size }1& &\text{(fiber at infinity).}
\end{align*}

\subsection{Standard toric action on a Hirzebruch surface}	
\labell{thirz}
Let $S^1$ be identified with the unit circle in $\C$ and consider the Hamiltonian action of $T=(S^1)^2$ on $(\Hirz_n, \eta_\mu)$ induced by
\begin{equation}\labell{toric-act}
(s,t)\cdot ([w_1,w_2],[z_0,z_1,z_2]) = ([w_1,sw_2],[tz_0,z_1,s^n z_2]).
\end{equation}
The moment map image of this torus action is a trapezoid,  as in Figure \ref{trapezoid}, called a \emph{standard Hirzebruch trapezoid} $H_{n,\mu}$. The zero section $S_0$, the section at infinity $S_\infty$, the fiber at zero $F_0$, and the fiber at infinity $F_\infty$ are $T$-invariant spheres, whose moment map images are the top, bottom, left, and right edges of the trapezoid, respectively.
By Delzant's construction \cite{delzant} there is an integrable $T$-invariant almost complex structure $J_T \in \J(\Hirz_n, \eta_\mu)$, for which the spheres $S_0$, $S_\infty$, $F_0$, $F_\infty$ are $J_T$-holomorphic.

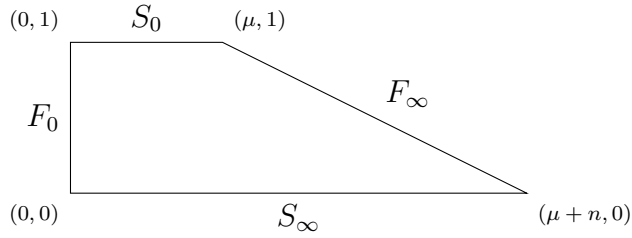
\begin{figure}
\begin{tikzpicture}
	\draw (0,0) node [below left] {\tiny $(0,0)$}
	-- node [below] {$S_\infty$} (6,0) node [below right] {\tiny $(\mu+n, 0)$}
	-- node [above right] {$F_\infty$} (2, 2) node [above right] {\tiny $(\mu, 1)$}
	-- node [above] {$S_0$} (0, 2) node [above left] {\tiny $(0, 1)$}
	-- node [left] {$F_0$} (0,0) -- cycle;
\end{tikzpicture}
\caption{standard Hirzebruch trapezoid}\labell{trapezoid}
\end{figure}

\subsection{Standard cyclic action on a Hirzebruch surface}\labell{hirzcyclic}
Let $G$ be a cyclic group of finite order $r$. Let a generator of $G$ be identified with a primitive $r$th root of unity $\xi$ in $\C$. Consider the $G$-action on the Hirzebruch surface $\Hirz_n$ defined by
\[
\xi\cdot (z, [x_1, x_2]) =
\begin{cases} (\xi^a z, [\xi^b x_1, x_2]) & \text{on } V_1, \\
(\xi^{-a} z, [\xi^b x_1, \xi^{na}x_2]) & \text{on } V_2,
\end{cases}
\] where $\gcd(a,b,r)=1$.
Equivalently,
\begin{equation}\labell{cyclic-act}
\xi \cdot ([w_1, w_2], [z_0, z_1, z_2]) = ([w_1, \xi^a w_2], [\xi^b z_0, z_1, \xi^{na} z_2]).
\end{equation}
We shall call such an action a standard $G$-action on the Hirzebruch surface and refer to a Hirzebruch surface with such a standard $G$-action as $\Hirz_n(a,-b)$.

This $G$-action sends a fiber to a fiber while keeping two fibers invariant: $F_0$ and $F_\infty$. The zero section $S_0$ and the section at infity $S_\infty$ are also $G$-invariant. 
By definition, the integers $(a,-b) \pmod r$ are the rotation numbers of the $G$-action at the fixed point $S_0 \cap F_0$, that is, the weights of the $G$-representation on the tangent space of the fixed point. The rotation numbers at the fixed point $S_\infty \cap F_\infty$ are $(-a, b-na)$.

Moreover, let $c=\gcd(a,b)$ and so $\gcd(c,r)=1$; the standard $G$-action \eqref{cyclic-act} is the restriction induced by the inclusion $G \hookrightarrow S^1$, $g \mapsto g^c$ of the circle action defined in the same way by replacing $\xi$ by $\theta \in S^1$, $a$ by $a/c$, and $b$ by $b/c$, which can also be seen as the circle action obtained as the inclusion $S^1 \hookrightarrow T^2$, $\theta \mapsto (\theta^{a/c}, \theta^{b/c})$ followed by the toric action $\eqref{toric-act}$.

%%%%%%%%%%%
%%-----------------------------
\section{Cyclic actions on the sphere}
%%-------------------------------

\begin{Proposition}\labell{moseractst}
A symplectic action of a cyclic group $G$ of finite order on $(S^2,\tau)$
is conjugate to the restriction of a circle action by a symplectomorphism. 
\end{Proposition}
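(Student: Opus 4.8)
The plan is to pass to a $G$-invariant complex structure, recognize the action as holomorphic and hence as a rotation, and then correct the area form by an equivariant Moser argument. First I would invoke Claim \ref{claimginv} to produce a $G$-invariant $\tau$-compatible almost complex structure $J$ on $S^2$. Because $S^2$ is two-dimensional, $J$ is automatically integrable, so $(S^2,J)$ is biholomorphic to $\CP^1$; fixing such a biholomorphism lets me view the $G$-action as a holomorphic action on $\CP^1$ that preserves a $G$-invariant area form $\tau_1$ (the image of $\tau$). Since $J$ is $\tau$-compatible, $\tau_1$ induces the complex orientation of $\CP^1$.

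Next I would put the holomorphic action into normal form. A generator $g$ acts by an automorphism $\sigma_g\in\PSL(2,\C)$ of finite order; any lift to $\mathrm{GL}(2,\C)$ has a power that is scalar, so its minimal polynomial has distinct roots and the lift is diagonalizable. Hence $\sigma_g$ is conjugate in $\PSL(2,\C)$ to a rotation $R_0\colon z\mapsto \xi z$ with $\xi$ a root of unity, and after conjugating by the corresponding biholomorphism I may assume $\sigma_g=R_0$. (If the action is not effective I first replace $G$ by its quotient by the kernel, whose induced action is effective and whose circle action pulls back.) The rotation $R_0$ is the restriction to $G$ of the standard circle action $z\mapsto e^{i\theta}z$, which preserves the Fubini--Study form $\tau_0$.

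It then remains to make the conjugating map symplectic. Normalize $\tau_0$ to have the same total area as $\tau_1$; both are $R_0$-invariant positive area forms inducing the complex orientation, so they are cohomologous and the interpolation $\tau_t=(1-t)\tau_0+t\tau_1$ is a path of $R_0$-invariant area forms. Choosing a primitive of $\tau_1-\tau_0$ and averaging it over $G$ gives a $G$-invariant family of Moser vector fields whose time-one flow $\psi$ is a $G$-equivariant diffeomorphism with $\psi^*\tau_1=\tau_0$. Composing $\psi$ with the biholomorphism of the previous step yields a symplectomorphism of $(S^2,\tau)$ conjugating the given $G$-action to $R_0$, that is, to the restriction of a circle action.

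The hard part is the equivariant Moser step: I must check that the interpolating forms stay positive (which is automatic, since $\tau_0$ and $\tau_1$ share the complex orientation) and that the Moser vector field can be chosen $G$-invariant, which is arranged by averaging the primitive over the finite group $G$. The only genuinely non-formal ingredient is the classical normal form asserting that a finite-order automorphism of $\CP^1$ is conjugate to a rotation.
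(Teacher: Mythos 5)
Your proposal is correct and follows essentially the same route as the paper's own sketch: a $G$-invariant compatible almost complex structure via Claim \ref{claimginv}, integrability in (real) dimension two, the Uniformization Theorem to realize the action inside $\PSL(2,\C)$, conjugation of the finite-order automorphism to a rotation, and an equivariant Moser argument to make the conjugation symplectic. The details you supply (diagonalizability of a lift to $\mathrm{GL}(2,\C)$, positivity of the convex interpolation, and averaging the Moser primitive over $G$) are precisely the ingredients the paper leaves implicit.
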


Proposition \ref{moseractst} follows from well known results. Here is a sketch of the proof.
By Claim \ref{claimginv}, there is a $G$-invariant $\tau$-compatible almost complex structure on $S^2$.
It follows then from the Newlander--Nirenberg Theorem that it must be integrable. Hence,
by the Uniformization Theorem, it is biholomrphic to the standard almost complex structure on $S^2$. So $G$ is a subgroup of the automorphism group $\PSL(2,\C)$ of $\CP^1$.
In $\PSL(2,\C)$ every element is conjugate to a rotation map, and in particular a generator of $G$ is conjugate to a rotation map.
By further applying the equivariant version of Moser's method, noting that the ingredients in the usual Moser's method can be made $G$-equivariant, the symplectic action of the finite cyclic group $G$ on $(S^2,\tau)$ is conjugate by a symplectomorphism to the restriction of a circle action.

\begin{Remark}
	The circle action in the above proposition does not need to be assumed effective. 
	
	Suppose the order of the cyclic group $G$ is $r$, and suppose the weight of the cyclic action at a fixed point on $S^2$ is $a$. By Proposition \ref{moseractst}, up to conjugation, the cyclic action is the restriction induced by the inclusion $G \hookrightarrow S^1$, $g \mapsto g^a$ of the effective circle action $\theta \cdot (z, x) \mapsto (\theta z, x)$, where $\theta \in S^1$ and $(z, x) \in S^2$ identified as the unit sphere in $\R^3 \cong \C \times \R$. On the other hand, this $G$-action can also be considered as a restriction of a non-effective circle action $\theta \cdot (z, x) \mapsto (\theta^a z, x)$ induced by the inclusion $g \mapsto g$. In either case, the integer $a \pmod{r}$ will be called the rotation number (at the chosen fixed point) of the $G$-action on the sphere $S^2$. 
	 
	 Moreover, the cyclic action in the above proposition does not need to be assumed effective either. If the cyclic action is effective, then $\gcd(a, r)=1$.
\hfill $\diamondsuit$ 	   	 
\end{Remark}

\end{document}